\numberwithin{equation}{section}
\newtheorem{theorem}{Theorem}[section]
\newtheorem{proposition}[theorem]{Proposition}
\newtheorem{lemma}[theorem]{Lemma}
\newtheorem{definition}[theorem]{Definition}
\newcommand{\rn}{\mathbb{R}^n}
\newcommand{\N}{\mathbb{N}}
\newcommand{\bk}{\bar{k}}
\newcommand{\bxi}{\bar{\xi}}
\newcommand{\br}{\bar{r}}
\newcommand{\bx}{\bar{x}}
\begin{document}

\title[Optimal concavity for torsion]{Optimal concavity of the torsion function}


\author{A. Henrot, C. Nitsch, P. Salani, C. Trombetti}

\subjclass[2010]{35N25, 35R25, 35R30, 35B06, 52A40 }

\date{}


\begin{abstract}
In this short note we consider an unconventional overdetermined problem for the torsion function:
let $n\geq 2$ and $\Omega$ be a bounded open set in $\rn$ whose torsion function $u$  (i.e. the solution to 
$\Delta u=-1$ in $\Omega$, vanishing on $\partial\Omega$) satisfies the following property:
$\sqrt{M-u(x)}$ is convex, where $M=\max\{u(x)\,:\,x\in\overline\Omega\}$. 
Then $\Omega$ is an ellipsoid. 
\end{abstract}

\maketitle

\section{Introduction}
When studying a (well posed) Dirichlet problem, a natural question is whether and how some relevant geometric property of the underlying domain influences the solution. A deeply investigated situation is when the domain is convex and the involved equation is elliptic. A classical result in this framework is the following (see Makar-Limanov \cite{ML} in the planar case, \cite{kaw, ken} for $n>2$).
\medskip

\begin{proposition}\label{prop0}
Let $n\geq 2$, $\Omega$ be a bounded open set in $\mathbf{R}^n$ and let $u$ solve
\begin{equation}\label{pbt}
\left\{
\begin{array}{ll}
\Delta u=-1 & \mathrm{in}\,\,\Omega \\
u=0 & \mathrm{on}\,\,\partial \Omega
\end{array}
\right.
\end{equation}
If $\Omega$ is convex, then $u$ is $(1/2)$-concave, i.e. $\sqrt{u}$ is a concave function.
\end{proposition}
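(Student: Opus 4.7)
The plan is to use the concavity-function technique. Setting $v:=\sqrt u$, the goal becomes to show that $v$ is concave on $\overline\Omega$. Since $u=v^{2}$, a direct computation from $\Delta u=-1$ yields the semilinear equation
$$v\,\Delta v+|\nabla v|^{2}=-\tfrac12\qquad\text{in }\Omega,\qquad v=0\text{ on }\partial\Omega.$$
For each fixed $\lambda\in(0,1)$ I introduce the concavity deficit
$$C(x,y):=v\bigl(\lambda x+(1-\lambda)y\bigr)-\lambda v(x)-(1-\lambda)v(y),\qquad (x,y)\in\overline\Omega\times\overline\Omega,$$
which is well defined because $\Omega$ is convex; concavity of $v$ is equivalent to $C\ge 0$ for every choice of $\lambda$.

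Suppose, for a contradiction, that $C$ attains a strictly negative infimum somewhere. On the diagonal $C$ vanishes, so the infimum is attained off-diagonal. The key step is to rule out boundary minimizers. Hopf's lemma applied to $u$ gives $\partial_{\nu}u<0$ on $\partial\Omega$, so the inward normal derivative of $v=\sqrt u$ is $+\infty$ on $\partial\Omega$. Consequently, if an alleged minimizer $(x_{0},y_{0})$ had $x_{0}\in\partial\Omega$, then an inward perturbation $x_{0}\mapsto x_{0}+\varepsilon\nu$ would change $C$ by
$$-\lambda\sqrt{\varepsilon\,|\partial_{\nu}u(x_{0})|}+O(\varepsilon),$$
which is strictly negative for small $\varepsilon$, contradicting minimality. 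The same argument rules out $y_{0}\in\partial\Omega$. Hence the infimum is realized at an interior pair $(x_{0},y_{0})\in\Omega\times\Omega$ with $x_{0}\ne y_{0}$; write $z_{0}:=\lambda x_{0}+(1-\lambda)y_{0}\in\Omega$.

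First-order optimality in $(x,y)$ forces the gradient alignment
$$\nabla v(x_{0})=\nabla v(y_{0})=\nabla v(z_{0})=:p,$$
so the PDE specializes to $\Delta v=-K/v$ at each of the three points, where $K:=\tfrac12+|p|^{2}>0$. Second-order optimality translates into the quadratic-form inequality
$$\bigl(\lambda\xi+(1-\lambda)\eta\bigr)^{\top}D^{2}v(z_{0})\bigl(\lambda\xi+(1-\lambda)\eta\bigr)\ \ge\ \lambda\,\xi^{\top}D^{2}v(x_{0})\,\xi+(1-\lambda)\,\eta^{\top}D^{2}v(y_{0})\,\eta$$
for every $\xi,\eta\in\mathbf{R}^{n}$. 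The game is now to combine this matrix information with the PDE at $x_{0},y_{0},z_{0}$ to deduce $v(z_{0})\ge\lambda v(x_{0})+(1-\lambda)v(y_{0})$, contradicting $C(x_{0},y_{0})<0$.

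The main obstacle lies precisely in this last step. Specializing to $\xi=\eta$ and taking traces yields only the weighted harmonic-mean bound $1/v(z_{0})\le\lambda/v(x_{0})+(1-\lambda)/v(y_{0})$, which is strictly weaker than the arithmetic-mean conclusion and, by itself, consistent with $C<0$. The upgrade from the harmonic to the arithmetic mean requires exploiting the full quadratic-form inequality, in particular with test pairs $(\xi,\eta)$ satisfying $\lambda\xi+(1-\lambda)\eta=0$ (which force $(1-\lambda)D^{2}v(x_{0})+\lambda D^{2}v(y_{0})\le 0$), together with the supplementary stationarity $\partial_{\lambda}C=0$ (which gives $p\cdot(x_{0}-y_{0})=v(x_{0})-v(y_{0})$) and the specific numerical constant $-1/2$ on the right-hand side of $v\Delta v+|\nabla v|^{2}=-1/2$. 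This algebraic tuning is the heart of the Makar-Limanov/Kennington/Kawohl computation \cite{ML,ken,kaw} and is exactly what singles out the exponent $1/2$ as the optimal one for the torsion problem.
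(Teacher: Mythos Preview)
The paper does not actually prove Proposition~\ref{prop0}; it is quoted as a classical result and attributed to \cite{ML,kaw,ken}. So there is no ``paper's own proof'' to compare against --- the relevant benchmark is the Korevaar/Kennington concavity-function argument in those references, and your outline is precisely that method.

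As an outline it is essentially correct, but it is not a self-contained proof: you explicitly stop at the decisive point. After obtaining the harmonic-mean bound $1/v(z_{0})\le\lambda/v(x_{0})+(1-\lambda)/v(y_{0})$, you defer the upgrade to the arithmetic-mean inequality to \cite{ML,ken,kaw}. That is honest, but it means the ``main obstacle'' you name is still standing when your write-up ends. If this is meant as a proof rather than a roadmap, the Kennington computation (exploiting the second-order inequality with suitably scaled test pairs, together with the structure $\Delta v=-K/v$ with the \emph{same} $K$ at all three points) has to be written out.

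Two smaller points. First, you begin by fixing $\lambda\in(0,1)$ once and for all, and then later invoke the ``supplementary stationarity $\partial_{\lambda}C=0$''. These are inconsistent: if $\lambda$ is fixed there is no optimality condition in $\lambda$. To use $\partial_{\lambda}C=0$ you must minimize $C$ over $\overline\Omega\times\overline\Omega\times[0,1]$ (the Korevaar setup), and then argue that the minimizing $\lambda_{0}$ lies in $(0,1)$ because $C\equiv 0$ at the endpoints. Second, your boundary exclusion relies on Hopf's lemma, which needs an interior ball condition; a general convex $\Omega$ need not satisfy this (think of a polytope). The standard fix is to approximate $\Omega$ from inside by smooth strictly convex domains, prove $(1/2)$-concavity there, and pass to the limit --- a step worth mentioning.
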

We recall that the solution to problem \eqref{pbt} is  called the {\em torsion function} of $\Omega$, since the {\em torsional rigidity} of $\tau(\Omega)$ is defined by
as follows:
\begin{equation}
\label{capa}
\tau(\Omega)^{-1}=\min\left\{\frac{\int_{\Omega}|\nabla v|^2\,dx}{\left(\int_\Omega v\,dx\right)^2}\,:\,
v\in W^{1,2}_0(\Omega), v \not\equiv 0 \right\}\,;
\end{equation}
 the above minimum is  achieved by the solution $u$ to \eqref{pbt} and it holds $\tau(\Omega)=\displaystyle\int_\Omega u\,dx$.
\medskip

Nowadays there are several methods to prove general results like Proposition \ref{prop0} (see for instance \cite{kaw}, \cite{Caffarelli-Spruck}, \cite{kor2}), but not so much has been done to investigate the optimality of them.
When $\Omega$ is a ball, say $\Omega=B({\bx},R)=\{x\in\rn\:\,|x-{\bx}|<R\}$, the solution to \eqref{pbt} is 
$$
u_B(x)=\frac{R^2-|x-\bx|^2}{2n}\,,
$$
and it is {\em concave} (which is a stronger property than $(1/2)$-concavity). More generally, the same happens for every ellipsoid 
$E=\{x\in\rn\,:\,\sum_{i=1}^na_i(x_i-\bx_i)^2<R^2\}$, with $a_i>0$ for $i=1,\dots,n$, $\sum_{i=1}^na_i=n$; in this case the solution is 
$$
u_E(x)=\frac{R^2-\sum_{i=1}^na_i(x_i-\bx_i)^2}{2n}\,,
$$
and it is and it is {\em concave}.
One can wonder whether this concavity property of the torsion function characterizes balls or ellipsoids
(as it is the case for the Newtonian potential as shown in \cite{Salani}, see below). 
Actually the answer is negative since  there are convex domains whose  torsion function is concave, for instance:
\begin{itemize}
\item small perturbations of balls or ellipsoids (since $u_B$ and $u_E$ are uniformly concave).
\item the torsion function of the equilateral triangle of vertices $(-2,0),\,(1,\sqrt{3}),\,
(1,-\sqrt{3})$ is given by $u_T(x,y)=(4-3y^2+3xy^2-3x^2-x^3)/12$. Since the trace of the Hessian matrix 
of $u_T$ is $-1$ and its determinant is $(1-x^2-y^2)/4$, then  any (convex) level  set $u_T=c$ included into the unit disk has a convex torsion function ($u_T-c$).
\item more generally, for any convex domain $\Omega$, any (convex) level set sufficiently close to the maximum of
$u$ can provide a domain where the torsion function is concave.
\end{itemize}
Since the power concavity has a monotonicity property (namely, $u^\alpha$ concave $\Rightarrow$ $u^\beta$ concave
for $\beta\leq\alpha$), we can introduce the {\it torsional concavity exponent} of a convex domain $\Omega$ as the
number $\alpha^*(\Omega)$ defined as
$$\alpha^*(\Omega)=\sup\{\alpha>0,\ \mbox{such that } u^\alpha\ \mbox{is concave}\}$$
where $u$ is the torsion function of $\Omega$. Then we have the following property which shows that the ellipsoids and
many other domains maximize this quantity $\alpha^*(\Omega)$. Note that the same question has been raised by P. Lindqvist in \cite{lindqvist} about the first eigenfunction of the Dirichlet-Laplacian
and this question of optimality of the ball seems to be still open for the eigenfunction.
\begin{proposition}
For any   bounded convex open set, we have $\frac{1}{2}\,\leq \alpha^*(\Omega) \leq 1$.
\end{proposition}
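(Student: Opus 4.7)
The lower bound $\alpha^*(\Omega)\ge 1/2$ is simply a reformulation of Proposition \ref{prop0}, which asserts that $\sqrt u$ is concave whenever $\Omega$ is convex. All the content is therefore in the upper bound $\alpha^*(\Omega)\le 1$: the claim that $u^\alpha$ cannot be concave on $\Omega$ for any $\alpha>1$. The intuition driving the proof is that $u$ vanishes only linearly at $\partial\Omega$, so raising it to a power strictly greater than $1$ produces a function vanishing too fast at the boundary to remain concave.

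My plan is to reduce the upper bound to a one-dimensional argument along a segment entering $\Omega$. Fix any $x_0\in\partial\Omega$ and a unit vector $\nu$ with $x_0+t\nu\in\Omega$ for $t\in(0,T]$ for some $T>0$, and set $f(t)=u^\alpha(x_0+t\nu)$; then $f(0)=0$ and $f(T)>0$. If $u^\alpha$ were concave on $\Omega$, then $f$ would be concave on $[0,T]$, and from $f(0)=0$ the elementary inequality $f(t)\ge (t/T)f(T)$ (a direct consequence of $t = (t/T)T + (1-t/T)\cdot 0$) would force $f(t)/t\ge f(T)/T>0$ for every $t\in(0,T]$. On the other hand, since the torsion function of a bounded convex set is Lipschitz up to the boundary, $u(x_0+t\nu)\le Lt$ for some constant $L$, so $f(t)/t\le L^\alpha t^{\alpha-1}\to 0$ as $t\to 0^+$ whenever $\alpha>1$. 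This contradiction yields $\alpha^*(\Omega)\le 1$.

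The only mildly technical ingredient is the Lipschitz continuity of $u$ on $\overline{\Omega}$, which is classical for bounded convex $\Omega$, for instance through the $W^{2,p}$ regularity of the Dirichlet problem on convex domains, or through the subharmonicity of $|\nabla u|^2$ combined with a bound for $|\nabla u|$ on $\partial\Omega$. If one preferred to bypass it entirely, selecting a single smooth boundary point admitting an interior tangent ball and comparing $u$ with the ball's torsion function via Hopf's lemma still provides the local linear bound $u(x_0+t\nu)\le Ct$, which suffices. Apart from this minor point I foresee no real obstacle: the heart of the matter is that a nonnegative concave function vanishing at a point of its domain cannot vanish superlinearly at that point.
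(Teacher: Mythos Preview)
Your argument is correct and takes a genuinely different route from the paper's. The paper proves the upper bound by computing
\[
\Delta(u^\alpha)=\alpha u^{\alpha-2}\bigl[(\alpha-1)|\nabla u|^2-u\bigr]
\]
and showing, via an integral estimate over the level sets $\{u=\epsilon\}$ for small $\epsilon$, that this Laplacian is positive somewhere in $\Omega$; since a concave $C^2$ function has nonpositive Laplacian, $u^\alpha$ cannot be concave. You instead restrict to a segment issuing from a boundary point and use that a nonnegative concave one-variable function vanishing at an endpoint must grow at least linearly there, whereas $u^\alpha$ vanishes like $t^\alpha$. Both arguments detect the same boundary phenomenon---$u$ vanishes linearly, so $u^\alpha$ behaves like $d(x,\partial\Omega)^\alpha$---but yours is more elementary and sidesteps the level-set integration entirely.

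One small slip in your parenthetical alternative: an \emph{interior} tangent ball together with Hopf's lemma yields a \emph{lower} linear bound $u(x_0+t\nu)\ge ct$, not the upper bound $u(x_0+t\nu)\le Ct$ that your argument needs. For the upper bound the correct comparison is with a domain \emph{containing} $\Omega$ and tangent at $x_0$---for instance the slab between the supporting hyperplane at $x_0$ and a parallel hyperplane at distance $\mathrm{diam}(\Omega)$, whose torsion function is explicit and vanishes linearly at $x_0$; the maximum principle then gives $u\le Ct$ at once. This does not affect your main line, since your primary justification via global Lipschitz regularity of $u$ on convex domains is correct as stated.
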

\begin{proof}
The first inequality comes from Proposition \ref{prop0}. Let us prove the second inequality. Let $u$ be the torsion
function of the domain $\Omega$ and let $\alpha>1$ be fixed, for $x \in \Omega$ it results
 $\Delta (u^\alpha)=\alpha u^{\alpha-2}[(\alpha-1)|\nabla u|^2 -u]=G(x)$.  Observing that $\displaystyle\int_{u= \epsilon}| Du| = \textrm{Vol}\{u>\epsilon\}$ one can deduce that $\displaystyle\int_{u= \epsilon} \Delta (u^\alpha) \ge \alpha \epsilon^{\alpha -2} \left[ (\alpha -1) \dfrac{\textrm{Vol}^2\{u>\epsilon\}}{\textrm{Per}\{u>\epsilon\}} -\epsilon \ \textrm{Per}\{u>\epsilon\} \right],$ which is positive for $\epsilon$ small enough since $\textrm{Vol}\{u>\epsilon\}\to\textrm{Vol}(\Omega)$ and $\textrm{Per}\{u>\epsilon\}\to\textrm{Per}(\Omega)$ as $\epsilon \to 0$.
Therefore there exists   a point $x_0\in  \Omega$ where  $G$ is   positive, hence  $u^\alpha$ cannot be concave in $\Omega$.
\end{proof}

\medskip
In order to get a property which characterizes balls and ellipsoids, we  introduce 
the property (A), defined as follows.
\begin{definition}\label{propertyA}
Let $\Omega$ be a bounded convex open set in $\rn$. We say that a function $v\in C(\overline\Omega)$ satisfies  property (A) in $\Omega$ if
$$
(A)\qquad w(x)=\sqrt{M-v(x)}\quad\text{is   convex  in $\Omega$}\,,
$$
where $M=\max_{\overline\Omega}v$.
\end{definition}
It is easily seen that if a function $v$ satisfies property (A) is concave and also  $(1/2)$-concave. 
%
Then one can suspect that the result by Makar-Limanov and Korevaar may be improved and could, for instance, guess that property (A) is satisfied by the solution $u$  to problem \eqref{pbt} as soon as $\Omega$ is convex.

We will prove that this is not true and that property (A) is "sharp" for $u_E$, in the sense that it characterizes ellipsoids. Precisely our main result is the following.
\begin{theorem}\label{mainthm}
Let $\Omega$ be a bounded  open set  and let $u\in C^2(\Omega)\cap C(\overline\Omega)$ be the solution to \eqref{pbt}.
If $u$ satisfies property (A), then $\Omega$ is an ellipsoid and $u=u_E$.
\end{theorem}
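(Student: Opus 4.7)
The plan is to introduce a harmonic auxiliary function whose sign is controlled by property (A), and then apply the strong maximum principle. As a preliminary remark, property (A) implies that $u = M - w^2$ is concave and that $\Omega = \{w < \sqrt{M}\}$ is convex (a sublevel set of a convex function whose boundary value is $\sqrt{M}$). Consequently, the set $\{u = M\}$ is a nonempty convex subset of the interior of $\Omega$; I pick any $\bx$ in it and, after translation, assume $\bx = 0$.

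Next, define
\[
J(x) = x\cdot\nabla u(x) - 2u(x) + 2M.
\]
The identity $\Delta(x\cdot\nabla u) = 2\Delta u + x\cdot\nabla(\Delta u)$ together with $\Delta u\equiv -1$ gives $\Delta J\equiv 0$ in $\Omega$, and clearly $J(0) = 0$. The heart of the argument is to show $J\leq 0$. For any $x\in\Omega$ with $u(x) < M$, the function $w$ is smooth at $x$, and the convexity inequality evaluated at $y = 0$ reads
\[
0 = w(0) \geq w(x) + \nabla w(x)\cdot(0 - x),\qquad\text{i.e.}\qquad x\cdot\nabla w(x)\geq w(x).
\]
Substituting $\nabla w = -\nabla u/(2w)$ and $w^2 = M-u$ turns this into $-x\cdot\nabla u(x) \geq 2(M-u(x))$, i.e., $J(x)\leq 0$. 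At the remaining points of $\Omega$, $u = M$ forces $\nabla u = 0$, so $J = 0$ there as well.

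Since $J$ is harmonic, bounded above by $0$, and attains the value $0$ at the interior point $0$, the strong maximum principle forces $J\equiv 0$ in $\Omega$. This is precisely Euler's identity $x\cdot\nabla(M-u) = 2(M-u)$, so $M-u$ is positively $2$-homogeneous from the origin on the star-shaped (in fact convex) domain $\Omega$. Writing $(M-u)(r\omega) = r^2 g(\omega)$ for $\omega\in S^{n-1}$ and using $\Delta(M-u) = 1$ reduces the problem to the spherical equation $\Delta_{S^{n-1}}g + 2n\, g = 1$, whose smooth solutions are precisely restrictions to $S^{n-1}$ of quadratic forms with Laplacian $1$. Hence $M-u = \tfrac{1}{2}\, x^T A x$ with $\mathrm{tr}(A) = 1$; non-negativity of $M-u$ makes $A$ positive semi-definite, and boundedness of $\Omega = \{x^T A x < 2M\}$ excludes any non-trivial kernel of $A$, so $A$ is positive definite. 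Therefore $\Omega$ is an ellipsoid and $u = u_E$.

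The main thing to spot is the harmonic function $J$, and the observation that property (A) becomes precisely the one-sided bound $J\leq 0$ upon applying the convexity inequality at the vanishing point of $w$; the rest of the argument is routine strong maximum principle and classification of $2$-homogeneous solutions of $\Delta f = 1$.
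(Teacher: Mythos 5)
Your proof is correct, and it takes a genuinely different route from the paper's. The paper writes $v=M-u=\tfrac12\sum\lambda_ix_i^2+z$ with $z$ harmonic and vanishing to third order at the maximum point, expands $z$ in spherical harmonics, and shows that the radial convexity inequality $2vv_{rr}-v_r^2\ge0$ kills every harmonic of degree $\ge3$ by a leading-order contradiction argument (using that each $z_k$ has zero mean on $S^{n-1}$, hence is negative somewhere). You instead compress the needed consequence of property (A) into the single pointwise inequality $J=x\cdot\nabla u-2u+2M\le0$, obtained from the supporting-hyperplane inequality for $w$ at its zero; since $J$ is harmonic and vanishes at the interior point $0$, the strong maximum principle gives $J\equiv0$, i.e.\ Euler's identity, and the rest is the standard classification of $2$-homogeneous solutions of $\Delta v=1$ via the spectrum of $\Delta_{S^{n-1}}$ (where $k=2$ is exactly the resonant mode $k(k+n-2)=2n$). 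I checked the key computations: $\Delta(x\cdot\nabla u)=2\Delta u+x\cdot\nabla(\Delta u)=-2$, so $\Delta J=0$; and the convexity inequality at $y=0$ does give $-x\cdot\nabla u\ge 2(M-u)$ where $u<M$, with $J=0$ on $\{u=M\}$. Note that both proofs ultimately use only the convexity of $w$ along rays through the maximum point (your $J\le0$ is equivalent to $rv_r\ge 2v$, i.e.\ monotonicity of the radial difference quotient of $\sqrt v$ from its zero), so neither needs the full Hessian convexity. Your argument is arguably cleaner: it replaces the order-of-vanishing contradiction by a one-line maximum principle, it immediately localizes (giving the Proposition in the paper's final remarks, since $J\equiv0$ near $x_0$ plus analyticity of $u$ forces $u$ to be a quadratic polynomial), and the only place spherical harmonics enter is the routine spectral classification. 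Two minor points worth tightening in a write-up: the propagation of $v=\tfrac12x^{T}Ax$ from a neighborhood of $0$ to all of $\Omega$ should be justified (star-shapedness from convexity, or real-analyticity of $u$), and the exclusion of a kernel of $A$ should be phrased as: along a ray in $\ker A$ one has $u\equiv M>0$, so the ray never meets $\partial\Omega$ (where $u=0$), contradicting boundedness.
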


As far as we know this is just the second step in the direction of investigating sharpness of concavity properties of solutions to elliptic equation, a first step being done by one of the authors in \cite{Salani}, where the following is proved: 
{\em let $n\geq 3$, $\Omega$ be a compact convex subset of $\rn$ and $u$ be  the Newtonian potential of $\Omega$, that is the solution to
\begin{equation}\label{pbc}
\left\{\begin{array}{ll}
\Delta u=0\quad&\text{in }\rn\setminus\overline\Omega\\
u=1\quad&\text{in }\overline\Omega\\
u\to0\quad&\text{as }|x|\to+\infty\,;
\end{array}\right.
\end{equation}
if $u^{2/(2-n)}$ is convex, then $\Omega$ is a ball.}

Notice that both the latter result and Theorem \ref{mainthm} can be regarded as (unconventional) overdetermined problems. 
In general, an overdetermined problem is a Dirichlet problem coupled with some extra condition and the prototypal one is the Serrin problem, where \eqref{pbt} is coupled with the following Neumann condition:
\begin{equation}\label{serrin}
|\nabla u|=\text{constant}\quad\text{on }\partial\Omega\,.
\end{equation}
In a seminal paper \cite{Serrin}, Serrin proved that a solution to \eqref{pbt} satisfying \eqref{serrin} exists if and only if $\Omega$ is a ball.
The literature about overdetermined problems is quite large, but
usually the extra condition imposed to the involved Dirichlet problem regards the normal derivative of the solution on the boundary of the domain, like in \cite{Serrin}, and the solution is given by the ball. Recently different conditions have been considered, like for instance in \cite{CM, CMS, CMV, CuLa, CuLa2, MaSa1, MaSa2, Sc, Sh}. More particularly, in \cite{Salani} the overdetermination is given by the convexity of $u^{2/(2-n)}$; here, in a similar spirit, the overdetermination in Theorem \ref{mainthm} is given by property (A). Again in connection with Theorem \ref{mainthm}, we also recall that overdetermined problems where the solution is affine invariant and it is given by ellipsoids are considered in \cite{BGNT, ES, HPh} etc.

\section{Proof of Theorem \ref{mainthm}.}
Throughout, $u\in C^2(\Omega)\cap C(\overline\Omega)$ denotes the solution to \eqref{pbt} and 
$$
M=\max_{\overline\Omega}u\,.
$$
Notice that the maximum principle gives
$$
0< u\leq M\quad\text{ in }\Omega\,.
$$
Without loss of generality (up to a translation), we can assume $B_\rho(0)\subset\Omega$ for some $\rho>0$ and 
$$u(0)=M\,.
$$
Then $\nabla u(0)=0$. Furthermore, up to a rotation, we can also assume
$$
D^ 2u(0)=\left(\begin{array}{cccc}
-\lambda_1&\dots&0\\
0&\ddots&0\\
0&\dots&-\lambda_n\end{array}\right)\,,
$$
with $\lambda_i\geq 0$ for $i=1,\dots,n$ and  
$$\sum_{i=1}^n\lambda_i=1\,,$$ 
thanks to the equation in \eqref{pbt}.
\medskip

Let $v(x)=M-u(x)$, then
\begin{equation}\label{eqv}
\left\{
\begin{array}{ll}
\Delta v=1 & \mathrm{in}\,\,\Omega \\
v=M & \mathrm{on}\,\,\partial \Omega\,.
\end{array}
\right.
\end{equation}
Moreover
$$
0\leq v<M\quad\mathrm{in}\,\,\Omega\,.
$$
and
\begin{equation}\label{v0}
v(0)=0\,,\quad\nabla v(0)=0\,,\quad D^ 2v(0)=\left(\begin{array}{cccc}
\lambda_1&\dots&0\\
0&\ddots&0\\
0&\dots&\lambda_n\end{array}\right)\,.
\end{equation}

Then we can write
$$
v(x)=\frac12\sum_{i=1}^n\lambda_ix_i^2+z(x)\,,
$$
where $z$ is a harmonic function in $\Omega$, such that
\begin{equation}\label{z0}
z(0)=0\,,\quad\nabla z(0)=0\,,\quad D^ 2z(0)=0\,.
\end{equation}

Theorem \eqref{mainthm} will be proved once we prove the following lemma.
\begin{lemma}\label{mainlemma}
Let $\lambda_1,\dots,\lambda_n\geq0$ such that $\sum\lambda_i=1$ and let $z$ be a harmonic function in a neighborhood $B_\rho(0)$ of the origin, satisfying \eqref{z0}.
If
$$
w(x)=\sqrt{\frac12\sum_{i=1}^n\lambda_ix_i^2+z(x)}
$$
is a convex function in $B_\rho(0)$, then $z\equiv 0$.
\end{lemma}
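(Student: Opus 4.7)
\emph{Plan.} Since $z$ is harmonic in $B_\rho(0)$, it is real analytic there and, by \eqref{z0}, its Taylor series at the origin starts at order three. Write
\[
z(x) = \sum_{k\geq 3} p_k(x),
\]
each $p_k$ being a homogeneous harmonic polynomial of degree $k$. The lemma then amounts to showing $p_k\equiv 0$ for every $k\geq 3$, which I would prove by induction on $k$. The key observation is that convexity of $w$ forces convexity of the one-variable function $t\mapsto w(t\xi)$ on a symmetric interval around $0$, for every $\xi\in\mathbf{R}^n$.

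Set $Q(x):=\tfrac12\sum_i\lambda_i x_i^2$, so the radicand is $Q+z$. Assume inductively that $p_3=\cdots=p_{k-1}=0$, and fix $\xi$ with $Q(\xi)>0$. For small $|t|$ the Taylor expansion gives $(Q+z)(t\xi)=t^{2}Q(\xi)+t^{k}p_{k}(\xi)+O(t^{k+1})$, and factoring the quadratic piece leads to
\[
w(t\xi) = |t|\sqrt{Q(\xi)} + \frac{|t|\, t^{k-2}}{2\sqrt{Q(\xi)}}\,p_k(\xi) + o(|t|^{k-1}).
\]
Away from $t=0$, the first summand is linear on each half-line and contributes nothing to the classical second derivative. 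A direct computation gives
\[
\frac{d^{2}}{dt^{2}}\bigl(|t|\,t^{k-2}\bigr) = (k-1)(k-2)\,|t|^{k-3}\cdot\begin{cases}1 & k\text{ even},\\ \operatorname{sign}(t) & k\text{ odd},\end{cases}
\]
and the second derivative of the remainder is $o(|t|^{k-3})$ as $t\to 0$. Convexity of $t\mapsto w(t\xi)$ near $0$ therefore forces $p_k(\xi)\geq 0$ if $k$ is even and $p_k(\xi)=0$ if $k$ is odd; in the odd case the leading term changes sign across $t=0$, so only $p_k(\xi)=0$ is compatible with convexity on both sides.

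The set $\{\xi : Q(\xi)>0\}$ is open, and its closure is all of $\mathbf{R}^n$ (it is the complement of a proper linear subspace, nonempty because $\sum\lambda_i=1$). For odd $k$, the polynomial $p_k$ vanishes on this open set, hence identically. For even $k$, continuity yields $p_k\geq 0$ on $\mathbf{R}^n$; since $p_k$ is harmonic and $p_k(0)=0$, the strong minimum principle gives $p_k\equiv 0$. The induction closes and $z\equiv 0$ by real analyticity.

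The delicate point is the lack of smoothness of $w$ at the origin: where $Q+z$ vanishes quadratically, $w$ has a conical singularity and is typically not $C^{1}$ there, so the multivariate Hessian form of convexity is awkward to apply directly. Restricting to one-dimensional line tests bypasses this cleanly, since the singular piece $|t|\sqrt{Q(\xi)}$ contributes only a harmless positive Dirac mass at $t=0$ to the distributional second derivative, leaving the pointwise convexity condition for $t\neq 0$ entirely to the next-order term, from which the required information about $p_k$ is extracted.
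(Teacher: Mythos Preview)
Your argument is correct and follows the same overall strategy as the paper: expand $z=\sum_{k\ge 3}p_k$ in homogeneous harmonic polynomials, restrict $w$ to lines through the origin, and read off the sign of the lowest nonvanishing $p_k$ from the leading term of the expansion. The endgame, however, is handled differently. The paper works only with $r>0$ and computes $2vv_{rr}-v_r^2$; it then uses the mean value property of the harmonic function $r^{\bar k}z_{\bar k}$ to guarantee a direction $\bar\xi$ with $z_{\bar k}(\bar\xi)<0$, obtaining a contradiction when $A(\bar\xi)>0$, and treats the degenerate case $A(\bar\xi)=0$ separately via the nonnegativity of the radicand itself. You instead test the line in both directions $t\gtrless 0$: for odd $k$ the sign flip forces $p_k(\xi)=0$ directly, and for even $k$ you only get $p_k\ge 0$ but then invoke the strong minimum principle ($p_k$ harmonic, nonnegative, $p_k(0)=0$). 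Your use of density of $\{Q>0\}$ neatly avoids the paper's separate $A(\bar\xi)=0$ case. Both routes exploit harmonicity of $p_k$, but in complementary ways (mean value vs.\ minimum principle); yours is slightly cleaner, at the cost of splitting into parity cases. One small point worth making explicit is why the $o(|t|^{k-1})$ remainder may be differentiated twice to yield an $o(|t|^{k-3})$ second derivative: this is because on each half-line $w(t\xi)=\pm t\sqrt{Q(\xi)+t^{k-2}p_k(\xi)+\cdots}$ is an honest convergent power series in $t$, so termwise differentiation is legitimate.
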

\begin{proof}
Let $r=|x|$ and $\xi=x/r\in S^{n-1}$. Then we can write
$$
z(x)=z(r,\xi)=\sum_{k=0}^\infty r^kz_k(\xi)
$$
with
$$z_k(\xi)=\sum_{j=1}^{N(k,n)}a_{k,j}Y_{k,j}(\xi)\,,
$$
where $a_{k,j}$ are suitable coefficients,
$$
N(k,n)=\frac{(2k+n-2)(k+l-3)!}{(n-2)!k!}
$$
and $Y_k^j$, $j=1,\dots,N(k,n)$ is an orthonormal basis of spherical harmonics of degree $k$ in dimension $n$.
We recall that the spherical harmonic $Y_k^j$ is a solution to $$-\Delta_\xi Y=k(k+n-2)Y$$ (where $-\Delta_\xi$ denotes the spherical Laplacian), whence
we have that the function $\tilde{z}_k(x)=r^kz_k(\xi)$ is harmonic in $\rn$ for every $k$. Then we notice that, since $\tilde{z}_k(0)=0$ we have that $\int_{|x|=\rho}\tilde{z}_k d\sigma=0$ for every $\rho>0$, which yields
\begin{equation}\label{mean}
 \int_{S^{n-1}}z_k(\xi)\,d\xi=0\quad\text{for every }k\in\N\,.
 \end{equation}
 
Clearly, due to \eqref{z0}, we have $z_k\equiv 0$ for $k=0,1,2$. Then we can write
$$
z(x)=\sum_{k=3}^\infty r^kz_k(\xi)
$$
Now set 
\begin{equation}\label{defv}
v(r,\xi)=\frac{r^2}{2}\sum_{i=1}^n\lambda_i\xi_i^2+\sum_{k=3}^\infty r^kz_k(\xi)\,.
\end{equation}
The assumption about convexity of $w$ then implies
\begin{equation}\label{wrr}
w_{rr}=(\sqrt{v})_{rr}=\frac{\partial}{\partial r}\left(\frac{v_r}{2\sqrt{v}}\right)=\frac{1}{4v^{3/2}}\left[2v v_{rr}-v_r^2\right]\geq 0\quad\text{for } r\geq 0\,,
\end{equation}
for every fixed direction $\xi\in S^{n-1}$. 

Now we compute $v_r$ and $v_{rr}$:
\begin{equation}\label{vr1}\begin{array}{ll}
v_r=r\sum_i\lambda_i\xi_i^2+\sum_{k=3}^\infty kr^{k-1}z_k(\xi)\,,\\
\\
v_{rr}=\sum_i\lambda_i\xi_i^2+\sum_{k=3}^\infty k(k-1)r^{k-2}z_k(\xi)\,.
\end{array}
\end{equation}
By setting
$$
A(\xi)=\frac12\sum_i\lambda_i\xi_i^2\,,
$$
we can rewrite \eqref{vr1} as follows:
\begin{equation}\label{vr2}\begin{array}{ll}
v_r=2A(\xi)r+\sum_{k=3}^\infty kr^{k-1}z_k(\xi)\,,\\
\\
v_{rr}=2A(\xi)+\sum_{k=3}^\infty k(k-1)r^{k-2}z_k(\xi)\,.
\end{array}
\end{equation}
Furthermore
$$v(r,\xi)=A(\xi)r^2+\sum_{k=3}^\infty r^kz_k(\xi)\,.
$$
Now we can compute
\begin{equation}\label{Deltaradial}
\begin{array}{rl}
&2vv_{rr}-v_r^2=4A(\xi)^2r^2+2A(\xi)\sum_3^\infty (k^2-k+2)r^kz_k(\xi)+2\left(\sum_3^\infty r^kz_k(\xi)\right)\left(\sum_3^\infty k(k-1)r^{k-2}z_k(\xi)\right)\\
\\
&-\left(2A(\xi)r+\sum_{3}^\infty kr^{k-1}z_k(\xi)\right)^2\\
\\
=&2A(\xi)\sum_3^\infty (k^2-3k+2)r^kz_k(\xi)+2\left(\sum_3^\infty r^kz_k(\xi)\right)\left(\sum_3^\infty k(k-1)r^{k-2}z_k(\xi)\right)-\left(\sum_{3}^\infty kr^{k-1}z_k(\xi)\right)^2.
\end{array}
\end{equation}

By \eqref{wrr}, we have $2vv_{rr}-v_r^2\geq 0$, we want to show that this implies $z_k\equiv 0$ for every $k\geq 3$.
We will proceed by contradiction: let $\bar{k}$ be the first index ($\geq 3$) such that $z_{\bk}$ does not identically vanish.
Then we have
$$
2vv_{rr}-v_r^2=2A(\xi)(\bk^2-3\bk+2)r^{\bk}z_{\bk}+o(r^{\bk}).
$$
On the other hand, \eqref{mean} bears the existence of  $\bar{\xi}\in S^{n-1}$ such that
$$
z_{\bk}(\bxi)<0\,,
$$
then, if $A(\bxi)>0$, for $\br$ sufficiently small we would have
$$
2v(\br,\bxi)v_{rr}(\br,\bxi)-v_r^2(\br,\bxi)=2A(\bxi)(\bk^2-3\bk+2)\br^{\bk}z_{\bk}(\bxi)+o(\br^{\bk})<0\,,
$$
which contradicts \eqref{wrr}.

If $A(\bxi)=0$, from \eqref{defv} since $v\geq 0$ we get
$$0\leq v(\br,\bxi)= \br^{\bk} z_{\bk}(\bxi) + o(\br^{\bk})<0$$
for $\br$ sufficiently small, and we have  a contradiction as before.

The proof is complete.
\end{proof}

\section{Final remarks}

We finally note that the argument of the proof above is in fact local and we can prove a more general result.

\begin{proposition}
Let $A\subseteq\rn$ be a bounded connected open set and let $u\in C^2(A)$ be such that $\Delta u=-1$ in $A$. 

If there exists $x_0\in A$ such that
$$
v(x)=u(x_0)+<\nabla u(x_0), x-x_0>-u(x)
$$
is nonnegative and $\sqrt{v}$ is convex in a neighborhood of $x_0$, then $u(x)$ is a quadratic polynomial.
\end{proposition}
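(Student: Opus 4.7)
The plan is to reduce the statement to Lemma \ref{mainlemma} by the same normalization used in the proof of Theorem \ref{mainthm}, and then promote the local conclusion to a global one on $A$ by invoking real-analyticity of harmonic functions.

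First, I would translate coordinates so that $x_0=0$. With the given $v$, the hypothesis $\Delta u=-1$ gives $\Delta v=1$, and the definition yields $v(0)=0$ and $\nabla v(0)=0$. Since $v\geq 0$ in a neighborhood of $0$ and $v(0)=0$, the origin is a local minimum of $v$, so $D^2v(0)$ is positive semidefinite; its eigenvalues $\lambda_1,\dots,\lambda_n$ are nonnegative and, by $\Delta v(0)=1$, sum to $1$. After a rotation, I can assume $D^2 v(0)=\mathrm{diag}(\lambda_1,\dots,\lambda_n)$, exactly the normalization of \eqref{v0}.

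Second, I would write
\[
v(x)=\tfrac{1}{2}\sum_{i=1}^n\lambda_ix_i^2+z(x),
\]
which defines $z$ on all of the (translated/rotated) $A$. Since $\Delta v=1$ and $\Delta(\frac{1}{2}\sum\lambda_ix_i^2)=\sum\lambda_i=1$, we get $\Delta z=0$ on $A$; moreover $z(0)=0$, $\nabla z(0)=0$, $D^2z(0)=0$, so $z$ fulfills \eqref{z0}. The assumption that $\sqrt{v}$ is convex in a neighborhood $B_\rho(0)\subset A$ is precisely the hypothesis of Lemma \ref{mainlemma}, which therefore yields $z\equiv 0$ on $B_\rho(0)$.

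Finally, since $z$ is harmonic on the whole connected open set $A$, it is real-analytic there. Vanishing identically on the open set $B_\rho(0)$ then forces $z\equiv 0$ on $A$ by the identity principle. Consequently $v(x)=\frac12\sum\lambda_ix_i^2$ on $A$, and unwinding the definition of $v$ (and the translation/rotation) shows that $u$ is a quadratic polynomial on $A$. The only delicate point in this plan is the passage from the local conclusion of Lemma \ref{mainlemma} to a global one; this is handled cleanly by real-analyticity together with the connectedness of $A$, both of which are given for free.
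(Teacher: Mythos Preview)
Your proposal is correct and follows essentially the same approach as the paper, which simply records the normalization $x_0=0$, $D^2u(0)$ diagonal, and then says the argument proceeds as in Theorem~\ref{mainthm} starting from~\eqref{eqv}. You are just more explicit on two points the paper leaves tacit: that $v\ge 0$ with $v(0)=0$ forces $D^2v(0)\succeq 0$ (needed for the $\lambda_i\ge 0$ hypothesis of Lemma~\ref{mainlemma}), and that the local conclusion $z\equiv 0$ on $B_\rho(0)$ propagates to all of $A$ by real-analyticity of harmonic functions together with the connectedness of $A$.
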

\begin{proof}
As before, we can assume $x_0=0$ and that $D^2u(0)$ is diagonal. Then, with $v(x)$ as in the statement, the proof proceeds exactly as for Theorem \ref{mainthm}, starting from \eqref{eqv}.

\end{proof}

%
%
%
%
%

\bibliographystyle{amsplain}

\end{document}